\newtheorem{theorem}{Theorem}
\newtheorem{lemma}[theorem]{Lemma}
\newtheorem{proposition}[theorem]{Proposition}
\numberwithin{equation}{section}
\newcommand {\RM} {\mathbb R}
\newcommand {\CM} {\mathbb C}
\newcommand {\ZM} {\mathbb Z}
\def\C{\mathbf C}
\def\G{\mathbf G}
\def\H{\mathbf H}
\def\L{\mathbf L}
\def\M{\mathbf M}
\def\U{\mathbf U}
\def\cp{\mathbf c}
\def\Go{G_0}
\def\btheta{\boldsymbol\theta}
\def\pni{\par\noindent}
\def\psstar{\psi^*}
\def\ws{w^*}
\def\ptf{\,\,.}
\title[ERRATUM]{ Erratum to \\ 
 ``On the cuspidal cohomology of $S$-arithmetic subgroups of reductive groups over number fields " \\
 by A. Borel, J.-P. Labesse, J. Schwermer \\
Compositio Mathematica, 102 (1996), 1-40.}
\author{Jean-Pierre Labesse}
\thanks{We thank Laurent Clozel and Rapha\"el Beuzart-Plessis for very useful
discussions as well as the referee for pertinent remarks.
The second named author gratefully acknowledges the support of the Max-Planck-Institute for Mathematics, 
Bonn, in 2020}
\address{Institut de Math\'ematique de Marseille, UMR 7373,
Aix-Marseille Universit\'e,
France}
\email{Jean-Pierre.Labesse@univ-amu.fr}
\author{Joachim Schwermer}
\address{Faculty of Mathematics, University Vienna, Oskar-Morgenstern-Platz 1, A-1090 Vienna, 
Austria  resp. Max-Planck-Institute
for Mathematics,
Vivatsgasse 7, D-53111 Bonn, Germany.}
\email{Joachim.Schwermer@univie.ac.at}
\begin{document}
\maketitle

\vspace{0.5cm}
Let $G$ be a connected semisimple algebraic group  defined over a number field $k$.
Consider the Lie goup $\G=G(k\otimes\RM)$ and let us denote by $\btheta$ a Cartan involution.
A $k$-automorphism $\phi$ of $G$ is said to be of Cartan-type if the automorphism
$\Phi$  induced by $\phi$ on $\G$ can be written $\Phi=\mathrm{Int}(x)\circ\btheta$
 where $\mathrm{Int}(x)$ is the inner automorphism defined by some $x\in \G$.
 Theorem 10.6 of \cite{BLS} establishes the following result regarding the existence of non-trivial 
cuspidal cohomology classes for $S$-arithmetic subgroups of  $G$:

\begin{theorem} \label{th1}
Let $G$ be an absolutely almost simple algebraic group defined over $k$ 
that admits a Cartan-type automorphism. When the coefficient system is trivial, 
the cuspidal cohomology of $G$ over $S$ does not vanish, that is, 
every $S$-arithmetic subgroup of $G$ has a subgroup of finite index 
with non-zero cuspidal cohomology with respect to the trivial coefficient system.
\end{theorem}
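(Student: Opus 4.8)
The plan is to compute the Lefschetz number of the Cartan-type automorphism $\phi$ acting on the cohomology of an $S$-arithmetic quotient in two different ways — geometrically and spectrally — and then to isolate a non-zero cuspidal contribution. After replacing $\phi$ by a suitable finite-order representative and passing to a $\phi$-stable $S$-arithmetic subgroup $\Gamma$ of finite index (this is what produces the ``subgroup of finite index'' in the statement), I would work with the space $X_\Gamma=\Gamma\backslash\bigl(\G/K\times\prod_{v\in S_f}\mathcal B_v\bigr)$, where $K$ is the maximal compact subgroup fixed by $\btheta$ and $\mathcal B_v$ is the Bruhat--Tits building at the finite place $v\in S$. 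The induced map $\Phi=\mathrm{Int}(x)\circ\btheta$ acts on each factor, trivial coefficients make the Hopf--Lefschetz formalism available, and one has $L(\phi)=\sum_i(-1)^i\,\mathrm{tr}\bigl(\phi^*\mid H^i(X_\Gamma,\CM)\bigr)$. Equivalently this number is the geometric-versus-spectral comparison furnished by a $\phi$-twisted Arthur--Selberg trace formula.

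For the geometric side the Cartan-type hypothesis is exactly what I would exploit. Because $\Phi$ is of Cartan type, its fixed subgroup $\G^\Phi$ is compact, so the archimedean fixed locus on the symmetric space $\G/K$ is a finite set of isolated points, and at each such point the differential of $\Phi$ on the tangent space $\mathfrak p$ is $-\mathrm{id}$ (the geodesic symmetry), giving local index $\sgn\det(\mathrm{id}-(-\mathrm{id}))=+1$. Since $\G/K$ and each building $\mathcal B_v$ are $\mathrm{CAT}(0)$, the finite-order $\Phi$ has a non-empty convex, hence contractible, fixed-point set on every factor, so after imposing a suitable congruence condition on $\Gamma$ the global fixed-point set of $\phi$ on $X_\Gamma$ is non-empty, with the finite-place factors contributing Euler characteristic $1$. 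Consequently the local contributions all carry the same sign, do not cancel, and $L(\phi)$ equals the positive number of global fixed points; in particular $L(\phi)\neq 0$.

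For the spectral side I would decompose the cohomology, following Franke and Borel, into its cuspidal part and the remaining Eisenstein and residual part, $H^*(X_\Gamma,\CM)=H^*_{\mathrm{cusp}}\oplus H^*_{\mathrm{Eis}}$, both stable under $\phi^*$. The non-cuspidal summand is governed by constant terms along proper parabolic $k$-subgroups, so its Lefschetz number can be expressed through the induced action of $\phi$ on the Borel--Serre boundary data and on the cohomology of the Levi quotients. The goal is to show that this boundary Lefschetz number cannot absorb all of $L(\phi)$; once that is established, $\sum_i(-1)^i\mathrm{tr}\bigl(\phi^*\mid H^i_{\mathrm{cusp}}\bigr)\neq0$, which forces $H^*_{\mathrm{cusp}}\neq0$ and proves the theorem.

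I expect the separation of the Eisenstein and residual contribution to be the main obstacle, and indeed the delicate point that an erratum must treat with care. One has to track the $\phi$-action on the residual spectrum and on the cohomology induced from proper parabolics with trivial coefficients — including the unavoidable contribution of the constant functions in degree $0$ — and verify that the signs and multiplicities occurring there genuinely differ from the interior count produced by the compact Cartan-type fixed set. Controlling this comparison, rather than the geometric computation, is where the substantive work lies.
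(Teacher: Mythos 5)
Your geometric step rests on a misreading of ``Cartan-type'', and it fails in exactly the cases the theorem is meant for. The definition only requires $\Phi=\mathrm{Int}(x)\circ\btheta$ for \emph{some} $x\in\G$: this is composition with an arbitrary inner automorphism, not conjugation of $\btheta$ by $x$, so nothing forces the fixed group $\G^{\Phi}$ to be compact. Indeed, by Proposition~\ref{prop} of this paper, the complex conjugation $c$ on $\mathrm{Res}_{\CM/\RM}\Go$ is of Cartan-type precisely when $\H=\Go(\RM)$ has a \emph{compact Cartan subgroup} --- a far weaker condition than compactness of $\H$. Concretely, for $\Go=SL_2$ the conjugation $c$ on $\G=SL_2(\CM)$ equals $\mathrm{Int}(w)\circ\btheta$ with $\btheta(g)={}^t\bar g^{-1}$ and $w$ the standard Weyl element, so $c$ is Cartan-type; yet its fixed group is $SL_2(\RM)$, noncompact, and its fixed locus in $SL_2(\CM)/SU(2)$ is a totally geodesic hyperbolic plane, not a finite set of points. (Had Cartan-type implied compactness of $\G^{\Phi}$, case (2a) of Theorem~\ref{th2} would apply only to compact $\H$ and be nearly vacuous.) Consequently your geometric side collapses: the fixed components in the $S$-arithmetic quotient are positive-dimensional locally symmetric spaces (and positive-dimensional subcomplexes at the finite places), indexed by twisted conjugacy classes, and their contributions are \emph{signed Euler characteristics} of arithmetic quotients of twisted centralizers, in the style of Harder's Gauss--Bonnet theorem and of Rohlfs--Speh. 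These are not all $+1$, and whether they vanish or cancel is essentially as hard as the theorem itself; your conclusion $L(\phi)\neq 0$ does not follow.

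Note also that this erratum contains no proof of Theorem~\ref{th1}: it is quoted as Theorem 10.6 of \cite{BLS}, so it is the proof there that your proposal should be measured against, and that proof is not a geometric fixed-point count at all. In \cite{BLS} the Lefschetz number is defined and computed directly on the cuspidal spectrum by means of a simple form of the twisted Arthur--Selberg trace formula, with test functions engineered so that the problematic terms never arise: twisted Euler--Poincar\'e functions of Clozel--Delorme at the archimedean places, Kottwitz's Euler--Poincar\'e functions at the finite places in $S$ (whose orbital integrals vanish off elliptic elements), and units of the Hecke algebra at deep congruence level elsewhere. With such functions the non-cuspidal spectral contributions are controlled, the geometric side reduces to twisted elliptic terms, and nonvanishing is obtained by shrinking the congruence subgroup, since the central (identity) term --- a volume factor times an archimedean twisted Euler--Poincar\'e constant which is nonzero \emph{exactly because} $\phi$ is of Cartan-type --- grows faster than all remaining terms; this limiting argument is what produces the ``subgroup of finite index'' in the statement. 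In other words, the step you explicitly defer (showing the Eisenstein and residual part cannot absorb the geometric count, with the trivial representation's contribution included) is the crux of the matter, and the actual proof is structured precisely so as never to have to estimate those boundary contributions after the fact. Your proposal is thus missing both a correct geometric input and the core spectral mechanism.
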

 
 The following assertion appears in  \cite{BLS} as Corollary 10.7: \par\medskip
{\sl Assume that $G$ is $k$-split and $k$ totally real or $G = {\mathrm{Res}}_{k'/k}G'$ where $k'$ is a CM-field. 
Then the cuspidal cohomology of $G$ over $S$ with respect to the trivial coefficient system does not vanish.}
 
 The proof of the Corollary amounts to exhibit, in each case,  a Cartan-type automorphism. 
 In the first case, dealing with split groups, the proof
 is correct. As regards the second case where $G = {\mathrm{Res}}_{k'/k}G'$ with $k'$ a CM-field
 it was observed by J. Rohlfs and L. Clozel independently that
the assertion (and the proof) must be corrected since, to make sense, 
the argument implicitly uses strong extra assumptions. 
First of all,  $G'$ has to be defined over $k$ so that the complex conjugation $c$
induced by the non trivial element $\sigma$ in $\mathrm{Gal}(k'/k)$
acts as a $k$-rational automorphism of $G$. Observe that
strictly speaking one has to extend the scalars from $k$ to $k'$ before applying the restriction functor.
Further assumptions are necessary 
so that Corollary 10.7 in \cite{BLS} should be replaced by:

\begin{theorem} \label{th2}
Let $k$ be a totally real number field and
 $G$ be an absolutely almost simple algebraic group defined over $k$. Consider cases
\pni (1) $G$ is $k$-split 
\pni (2)  $G = {\mathrm{Res}}_{k'/k}G'$ 
where $k'$ is a CM-field with $G'$ defined over $k$ totally real,
satisfying one of the following hypothesis 
\pni (2a)  $\H=G'(k\otimes\RM)$ has a compact Cartan subgroup.
\pni (2b)  $G'$ is split over $k$ and simply connected.
\pni
Then  the cuspidal cohomology of $G$ over $S$ with respect to the trivial coefficient 
system does not vanish.  
\end{theorem}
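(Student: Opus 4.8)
The plan is to reduce both cases to Theorem~\ref{th1} by producing, in each situation, a Cartan-type $k$-automorphism $\phi$ of $G$; the non-vanishing of cuspidal cohomology then follows immediately. In case (1), where $G$ is $k$-split, I would take $\phi$ to be the Chevalley involution, which is defined over the prime field and in particular over $k$. At each real place the associated real group is the split form, whose Cartan involution $\btheta$ acts on a maximal split torus by $t\mapsto t\mun$; the Chevalley involution acts the same way. Hence $\Phi\circ\btheta\mun$ fixes a maximal split torus pointwise, so it is inner, and $\Phi=\mathrm{Int}(x)\circ\btheta$ is of Cartan-type. This recovers the (correct) original argument.

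For case (2) the first task is to describe $\G$ and $\btheta$ concretely. Since $k$ is totally real and $k'$ is CM, at each archimedean place $v$ of $k$ the place above it in $k'$ is complex, so $\G=G(k\otimes\RM)=\prod_{v\mid\infty}G'(\CM)_v$ is a product of complexifications of the real groups $G'(k_v)$, while $\H=\prod_{v\mid\infty}G'(k_v)$. The Cartan involution $\btheta$ of the complex group $\G$ is, on each factor, the conjugation $\tau$ with respect to a compact real form $\U$, whereas the automorphism $c$ induced by $\sigma\in\mathrm{Gal}(k'/k)$ is, on each factor, the conjugation with respect to the real form $G'(k_v)$. Thus $c$ is of Cartan-type precisely when, at every place, the conjugations defining $G'(k_v)$ and $\U$ differ by an inner automorphism.

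Under hypothesis (2a), each $G'(k_v)$ has a compact Cartan subgroup, which is the equal-rank condition equivalent to $G'(k_v)$ being an inner form of the compact form $\U$; therefore $c\circ\btheta\mun$ is inner on each factor and $c$ itself is a Cartan-type automorphism. Under hypothesis (2b), $G'$ is $k$-split and simply connected, so it carries a Chevalley involution $C$ defined over $k$, which on each complex factor satisfies the classical relation $\tau=C\circ\sigma$ exchanging the split and compact real forms. I would then take $\phi=C\circ c$: the induced automorphism is $\Phi=C\circ\sigma=\tau=\btheta$ on every factor, so $\phi$ is of Cartan-type with $x=e$. In both sub-cases Theorem~\ref{th1} applies.

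The step I expect to be the main obstacle is the real-form bookkeeping in case (2): verifying that ``possession of a compact Cartan subgroup'' is exactly the inner-twist condition making $c\circ\btheta\mun$ inner, and---more delicately---that the resulting inner automorphism can be realized by an element $x$ of $\G$ itself rather than merely of the adjoint group, which is where simple-connectedness and the precise normalization of the Chevalley involution enter (the same realization point must be checked in case (1)). Confirming that $\phi=C\circ c$ is genuinely $k$-rational on $\mathrm{Res}_{k'/k}G'$, and that $C$ and $c$ commute so that $\Phi=\btheta$ holds on the nose, is routine but must be carried out with care.
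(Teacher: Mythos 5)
Your overall strategy---reduce every case to Theorem~\ref{th1} by exhibiting a Cartan-type $k$-automorphism---is sound, and for cases (1) and (2a) it is essentially the paper's own route: case (1) is the argument of \cite{BLS}, and in case (2a) the ``equal-rank'' equivalence you invoke (existence of a compact Cartan subgroup $\Leftrightarrow$ $G'(k_v)$ is an inner form of the compact form $\Leftrightarrow$ $c$ is of Cartan-type) is precisely the content of Proposition~\ref{prop}, which the paper proves (via the chain (iii)$\Rightarrow$(v)$\Rightarrow$(ii)$\Leftrightarrow$(i), with Lemma~\ref{lem} supplying the Weyl--Chevalley construction of the compact form) while agreeing it is well known (\cite{Sh}, \cite{Langl}, \cite{A}). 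Where you genuinely diverge is case (2b): the paper does not construct a Cartan-type automorphism there at all, but quotes Theorem 4.7.1 of \cite{LBC}, a base-change result which itself rests on case (1); you instead take $\phi=C\circ c$ with $C$ the Chevalley involution of the split group $G'$, so that on each factor $G'(\CM)$ of $\G$ the induced map is the compact-form conjugation, i.e.\ $\Phi=\btheta$ on the nose. This is correct, more elementary and self-contained than the paper's citation, and it never actually uses simple connectedness: the Chevalley involution of a split group is defined over $k$ and descends from the simply connected cover to any isogeny quotient, since it inverts the center. So your argument proves a slightly stronger statement; the hypothesis ``simply connected'' in (2b) appears only because the quoted theorem of \cite{LBC} requires it.

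The realization issue you flag as the main obstacle is in fact harmless in case (2). There $\G$ is a product of groups of \emph{complex} points, and $G'(\CM)\to G'_{\mathrm{ad}}(\CM)$ is surjective because $\CM$ is algebraically closed; hence any algebraically inner automorphism of $\G$ is $\mathrm{Int}(x)$ for some $x\in\G$, with no appeal to simple connectedness. Better still, with the standard normalization ($C(H_\alpha)=-H_\alpha$, $C(X_\alpha)=-X_{-\alpha}$ on a Chevalley basis defined over $k$, which forces $C$ to commute with $c$) one gets $\Phi=\btheta$ exactly, so $x=e$ in both (1) and (2b). The one place where real work remains if you want to be self-contained is the equal-rank equivalence in (2a) itself---that is exactly what the paper's Proposition~\ref{prop} and Lemma~\ref{lem} are for.
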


\begin{proof} In case (1), when $G$ is $k$-split, the proof is given in \cite{BLS}:
it relies on the first case of \cite[Corollary 10.7]{BLS} and
Theorem~\ref{th1}. In case (2a)
the result follows from Proposition~\ref{prop} below and Theorem~\ref{th1}.
In case (2b) the assertion  is a particular case of Theorem 4.7.1 of \cite{LBC},
which in turn relies on case (1) above.
\end{proof}

Most of the following proposition is well known
(see in particular  \cite[Corollary 2.9]{Sh},  \cite[Lemma 3.1]{Langl} and \cite[p.~2132]{A})
but not knowing of a convenient reference we sketch a proof.

Consider a connected semisimple algebraic group $\Go$ defined over $\RM$. We denote by
 $\G=\Go(\CM)$ the  group of its complex points and 
 by $\cp$ the complex conjugation on $\G$. Then $\H=\Go(\RM)$ is the group of fixed point under $\cp$. 
This anti-holomorphic involution is induced by an $\RM$-automorphism $c$ of 
$G_1=\mathrm {Res}_{\CM/\RM}\Go$.
Let $\btheta$ be a Cartan involution of $\G$. The group $\U$ of fixed points of $\btheta$ in $\G$
is a compact real form: $\U=U(\RM)$ where $U$ is a form of $\Go$ but not necessarily inner.
Let $G^{**}$  be the split outer form of $\Go$. Choose a splitting $(B^*,T^*, \{X_\alpha\}_{\alpha\in\Delta})$
 for $G^{**}$ over $\RM$
 where $T^*$ is a torus in a Borel subgroup $B^*$
 and  for each $\alpha\in\Delta$, 
 the set of simple roots, $X_\alpha$ is a root vector. 
Let $\psstar$ be the automorphism of $G^{**}$ that preserves the splitting and whose action 
defines $G^*$, the quasi-split inner form of $\Go$.
Let $w$ be the element of maximal length in the Weyl group for $T^*$.

\begin{proposition} \label{prop}
The following assertions are equivalent:
\pni(i) The automorphism $c$ is of Cartan-type.
\pni (ii)  The group $\U$ is an inner compact real form.
\pni(iii) The group $\H$ has a compact Cartan subgroup. 
\pni(iv) The group $\H$ admits discrete series.
\pni(v)  The involution $w\circ \psstar$  acts by $-1$ on the root system of $T^*$.
\end{proposition}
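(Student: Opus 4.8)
I would organize the proof around three self-contained blocks --- the pair $\{(i),(ii)\}$, the pair $\{(iii),(iv)\}$, and the combinatorial statement (v) --- and link them through the Cartan involution attached to the real form $\H$. First, (i) $\Leftrightarrow$ (ii) is a reformulation: since $\cp$ is the anti-holomorphic involution with fixed points $\H$ and $\btheta$ the one with fixed points $\U$, the condition that $c$ is of Cartan-type, namely $\cp=\mathrm{Int}(x)\circ\btheta$ for some $x\in\G$, says exactly that $\cp$ and $\btheta$ determine the same class modulo $\mathrm{Inn}(\G)$, which is precisely the assertion that $\U$ is an inner form of $\Go$. I would record here that this condition is unchanged when $\cp$ is replaced by any inner conjugate, a fact used below. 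The equivalence (iii) $\Leftrightarrow$ (iv) is Harish-Chandra's equal-rank criterion for the existence of discrete series, and I would simply invoke it, pointing to the references already cited.

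The core of the argument is (ii) $\Leftrightarrow$ (iii). After conjugating $\cp$ by a suitable inner automorphism --- which, by the remark above, affects neither (i) nor (ii) --- I may assume $\cp$ and $\btheta$ commute. Then $\theta:=\cp\circ\btheta=\btheta\circ\cp$ is a holomorphic involution of $\G$ preserving $\H$, and its restriction $\theta_{\H}$ is a Cartan involution of $\H$ with fixed-point group the maximal compact subgroup $\H\cap\U$. With this normalization, the relation (i) reads precisely $\theta=\mathrm{Int}(x)$, i.e. $\theta$ is inner in $\G$. I would then use the classical fact that $\H$ has a compact Cartan subgroup if and only if $\theta_{\H}$ is inner in $\H$: the equal-rank direction produces a maximal torus contained in the $+1$-eigenspace $\mathfrak{k}$, on which $\theta_{\H}$ acts trivially, forcing $\theta_{\H}$ to be $\mathrm{Int}$ of a torus element, while conversely an inner involution fixes a maximal torus pointwise, which then lies in $\mathfrak{k}$. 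This yields (i) $\Leftrightarrow$ (ii) $\Leftrightarrow$ (iii).

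It remains to identify the combinatorial condition (v), which I would do by computing the outer class --- the induced automorphism of the based root datum of $G^{**}$ --- of each real form. By construction $\psstar$ is the outer class of $\Go$, since it is the pinning-preserving automorphism defining the quasi-split inner form $G^*$. For $\U$, any maximal torus is anisotropic, so the conjugation $\btheta$ acts by $-1$ on its character lattice; transporting this to the splitting $(B^*,T^*,\{X_\alpha\})$ and correcting by the unique Weyl-group element restoring the chosen positive system, namely the longest element $w$, shows that the outer class of $\U$ is the diagram automorphism $\chi\mapsto -w(\chi)$. Since $\U$ is inner to $\Go$ exactly when these two outer classes coincide, (ii) is equivalent to $\psstar=-w$ on the root system, hence, using $w^2=1$, to $w\circ\psstar=-1$, which is (v).

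The step I expect to require the most care is the compatibility in the second paragraph between innerness of $\theta$ as an automorphism of the complex group $\G$ and innerness of $\theta_{\H}$ as an automorphism of the real group $\H$, since a priori the former permits conjugation by complex points. I would reconcile the two through the action on the Dynkin diagram of $\G$, which is insensitive to the field of definition and on which $\theta$ and $\theta_{\H}$ induce the same automorphism, so that each of the two innerness statements is equivalent to triviality of that diagram automorphism. A secondary point demanding attention is the precise normalization of the $*$-action in the third paragraph --- the interplay of $w$ with $-1$ and the choice of pinning --- which must be fixed carefully to obtain the clean statement (v) rather than a conjugate of it.
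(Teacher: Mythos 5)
Your proof is correct in substance but organized genuinely differently from the paper's, so a comparison is in order. The paper proves a cycle: (i)$\Leftrightarrow$(ii) by definition; (ii)$\Rightarrow$(iii) by writing $\btheta=\mathrm{Int}(x)\circ\cp$ with $x$ normalizing a real maximal torus and extracting a compact Cartan subgroup from the $\cp$-stable centralizer $\L$ of $x$; (iii)$\Rightarrow$(v) by the same anisotropic-torus weight computation you use; (iii)$\Leftrightarrow$(iv) by Harish-Chandra; and finally (v)$\Rightarrow$(ii) by a separate lemma that constructs the compact inner form explicitly from a Chevalley basis (Weyl's unitary trick) and exhibits its cocycle $a_\sigma=\ws$. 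You instead prove two two-sided equivalences: (ii)$\Leftrightarrow$(iii), via the normalization that $\cp$ and $\btheta$ commute and the holomorphic involution $\theta=\cp\circ\btheta$, and (ii)$\Leftrightarrow$(v), via the classification of real forms by outer class, using that $\U$ has outer class $-w$ while $\Go$ has outer class $\psstar$. Your route is more economical: it makes the paper's construction lemma unnecessary, since the existence of the compact form is already built into the setup ($\U=\G^{\btheta}$), and (v)$\Rightarrow$(ii) becomes the statement that two real structures with the same image in $\mathrm{Out}(\G)$ are inner to each other. What it costs is reliance on two black boxes --- Cartan's theorem that $\cp$ admits a commuting Cartan involution, and the injectivity of the outer-class map on inner-equivalence classes of real forms --- whereas the paper's lemma is self-contained and moreover produces the explicit cocycle, which has independent value.

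One step needs repair: your reconciliation of innerness in $\G$ with innerness in $\H$ through the Dynkin diagram. Triviality of the induced diagram automorphism is equivalent to $\theta_{\H}$ being $\mathrm{Int}(\bar x)$ for $\bar x$ a \emph{real point of the adjoint group}, not for an element of $\H$: on $\H=SL(2,\RM)$ the automorphism $\mathrm{Int}(\mathrm{diag}(1,-1))$ is trivial on the diagram yet is not $\mathrm{Int}(h)$ for any $h\in\H$. (For Cartan involutions the strict statement does happen to be true, but only as a consequence of the equivalence with (iii) that you are in the middle of proving, so it cannot serve as the bridge.) The correct bridge is a one-line descent argument: if $\theta=\mathrm{Int}(x)$ with $x\in\G$, then $\theta=\cp\circ\theta\circ\cp$ forces $\cp(x)x^{-1}$ to be central, so the image $\bar x$ of $x$ in the adjoint group is fixed by $\cp$ and is an involution, hence semisimple; it therefore lies in a maximal torus defined over $\RM$, which $\theta$ fixes pointwise, and whose group of real points, being contained in $\H^{\theta_{\H}}=\H\cap\U$, is the desired compact Cartan subgroup. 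This is precisely the role played in the paper's proof by the cocycle relation $\mathrm{Int}(x\,\cp(x))=1$. With this substitution your argument is complete.
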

 
\begin{proof} The automorphism $c$ is of Cartan-type
 if, by definition,  $\cp=\mathrm{Int}(x)\circ\btheta$
  for some $x\in\G$ i.e. if and only if
 $U$ is an inner form of $\Go$ or equivalently of $G^*$. 
This proves the equivalence of (i) and (ii). 
Assume now that  $U$ is an inner form of $\Go$.
Up to conjugation under $\G$, we may assume that $\btheta$ 
 is of the form $\btheta=\mathrm{Int}(x)\circ \cp$
 with $x$ in the normalizer of a maximal torus $T$ in $\Go$ defined over $\RM$. 
In particular $x$ is semi-simple and its centralizer $\L$ in $\G$ is a complex
reductive subgroup of maximal rank.
The  cocycle relation $\mathrm{Int}(x\,\cp(x))=1$ implies that $\L$ is stable under $\cp$. Then $\cp$
 induces an anti-holomorphic involution on $\L$ whose fixed points  $\M=\L\cap\H=\U\cap\H$
 is a compact real form of $\L$.  A Cartan subgroup $\C$  of $\M$ is a compact Cartan subgroup in $\H$.
Hence (ii) implies (iii). 
Now, consider a torus $T$ in $\Go$ such that $\C=T(\RM)$ is compact. 
Then the complex conjugation $\cp$ acts by $-1$ on the weights of $T$. Hence
there is $n\in \mathbf G$ which belongs to the normalizer of $T^*\subset G^*$
such that $\mathrm{Int}(n)\circ \psstar$ acts as $-1$ on the root system of $T^*$. 
Now, since $\psstar$ preserves the set of positive roots,
$w=\mathrm{Int}(n)|_{T^*}$ is the element of maximal length in the Weyl group.
This shows that (iii) implies (v). 
The equivalence of (iii) and  (iv) is a well known theorem due to Harish-Chandra
(\cite{Har}, Theorem 13).  
Finally  Lemma~\ref{lem}  below shows that (v) implies (ii).
 \end{proof}

\begin{lemma} \label{lem}
Assume $w\circ \psstar$ acts by $-1$ on the root system.
Then, $G^*$ has an inner form  $U$ such that $\U=U(\RM)$ is compact.
\end{lemma}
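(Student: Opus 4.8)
The plan is to reduce the statement to a comparison of \emph{inner classes}. Real forms of the complex group $\G$ that lie in a fixed inner class are exactly the inner forms of the quasi-split member of that class, and the inner class is detected by the induced automorphism of the Dynkin diagram (the $*$-action on the based root datum). Thus it suffices to show that the compact real form of $\G$ --- which exists and is unique up to conjugacy by the classical theorem of Weyl and Cartan --- lies in the same inner class as $G^*$, namely the class cut out by $\psstar$.

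First I would fix the split conjugation $c_s$ of $G^{**}$ attached to the splitting $(B^*,T^*,\{X_\alpha\})$ together with the Chevalley involution $\theta_0$ of $G^{**}$, an $\RM$-automorphism normalized by $\theta_0(X_\alpha)=-X_{-\alpha}$ and $\theta_0|_{T^*}=\mathrm{inv}$, so that $\theta_0$ acts by $-1$ on $X^*(T^*)$. Since $\theta_0$ is defined over $\RM$ it commutes with $c_s$, and a direct check (in the spirit of the computation already made for (iii)$\Rightarrow$(v) above) shows that the antiholomorphic involution $c_s\circ\theta_0$ has compact fixed points: it is the conjugation defining the compact real form of $G^{**}$. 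Because $\theta_0$ acts by $-1$ on $X^*(T^*)$ and carries $B^*$ to the opposite Borel, after correcting by a representative of the longest element $w$ to restore the pinning its image in $\mathrm{Out}(\G)=\mathrm{Aut}(\Delta)$ is the opposition involution $-w$. Hence the compact form of $\G$ sits in the inner class whose $*$-action is $-w$.

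Next I would compare this with the inner class of $G^*$, whose $*$-action is $\psstar$ by construction. The hypothesis that $w\circ\psstar$ acts by $-1$ on the root system reads $\psstar=-w$ there; since $\psstar$ and $-w$ are both diagram automorphisms preserving $\Delta$ and they agree on the root lattice, they coincide in $\mathrm{Aut}(\Delta)$. Therefore $\theta_0$ and $\psstar$ have the same image in $\mathrm{Out}(\G)$, so $\theta_0=\mathrm{Int}(h)\circ\psstar$ for some $h\in\G$. Writing the quasi-split conjugation as $c_q=c_s\circ\psstar$, I then obtain
\[
c_s\circ\theta_0=c_s\circ\mathrm{Int}(h)\circ\psstar=\mathrm{Int}(c_s(h))\circ c_s\circ\psstar=\mathrm{Int}(c_s(h))\circ c_q ,
\]
which exhibits the compact conjugation $c_s\circ\theta_0$ as an inner twist of $c_q$. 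This says precisely that the compact form is an inner form $U$ of $G^*$, so $\U=U(\RM)$ is compact, as required.

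The step I expect to be the main obstacle is the bookkeeping identifying the $*$-action of the compact form as $-w$ and pinning down conventions: one must track how the antiholomorphic $c_s$ interacts with the holomorphic $\theta_0$ (that they commute and that $c_s\circ\theta_0$ is genuinely an involution with anisotropic fixed points), and one must read the equality $\psstar=-w$ in $\mathrm{Aut}(\Delta)$ rather than as a mere numerical identity on characters. Once the outer classes are matched, the inner-twisting identity above is a formal manipulation, and the existence of the compact form itself is classical.
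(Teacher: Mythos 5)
Your proof is correct, and its skeleton matches the paper's: both realize the compact conjugation as the split conjugation composed with the Chevalley involution, and both reduce the lemma to the identity that, under the hypothesis, the Chevalley involution agrees with $\psstar$ up to an inner automorphism. The genuine difference is how that identity is established. The paper works bottom-up with the Chevalley basis: it defines $\phi=\ws\circ\psstar$ and checks by hand, using $N_{\alpha,\beta}=-N_{-\alpha,-\beta}$, that $\phi(X_\alpha)=-X_{-\alpha}$ and $\phi(H_\alpha)=-H_\alpha$ for all roots, i.e. that $\phi$ is exactly the Chevalley involution; this is self-contained and has the bonus of producing the explicit twisting cocycle $a_\sigma=\ws$ (which is Galois-fixed because the Chevalley basis has integral structure constants, so $n^*$ can be taken real). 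You argue top-down instead: you read off the outer class of $\theta_0$ as the opposition involution $-w$, observe that the hypothesis says precisely $\psstar=-w$ as diagram automorphisms, and invoke the exact sequence $1\to\mathrm{Int}(\G)\to\mathrm{Aut}(\G)\to\mathrm{Aut}(\Delta)\to 1$ (rigidity of pinned automorphisms) to conclude $\theta_0=\mathrm{Int}(h)\circ\psstar$; the manipulation $c_s\circ\theta_0=\mathrm{Int}(c_s(h))\circ c_q$ is then formal, and the cocycle condition you do not mention is automatic since $c_s\circ\theta_0$ is an involution. Your route is shorter and more conceptual, at the price of leaning on the classification machinery (inner classes, the $\mathrm{Out}$ sequence) and leaving the cocycle implicit. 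Two small caveats: the identification $\mathrm{Out}(\G)=\mathrm{Aut}(\Delta)$ needs a word for general isogeny type, though your argument survives because for semisimple groups $\mathrm{Out}(\G)$ injects into $\mathrm{Aut}(\Delta)$ and agreement on the root lattice suffices; and the compactness of the fixed points of $c_s\circ\theta_0$ is really the Killing-form computation with the Weyl basis $Y_\alpha, Z_\alpha, W_\alpha$ (the paper's appeal to Helgason), not the (iii)$\Rightarrow$(v) step you point to --- though it is indeed classical, as you say.
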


   \begin{proof} Consider the complex Lie algebra $\mathfrak g=\mathrm{Lie\, }(\G)$. 
    Let $\Sigma$ be the set of roots, $\Sigma^+$ the set of positive roots
  and $\mathfrak g_\alpha$ the vector space attached to $\alpha\in\Sigma$
  with respect to the torus $T^*(\CM)$.
   Following Weyl \cite{W}, Chevalley \cite{Ch} and Tits \cite{Tits}
  one may choose elements  $X_\alpha\in\mathfrak g_\alpha$  
   for ${\alpha\in\Sigma}$, such that, if we define $H_\alpha\in\mathrm{Lie\, }(T^*(\CM))$ by
   $H_\alpha=[X_\alpha,X_{-\alpha}]$ we have
  $$ \alpha(H_\alpha)=2 \quad\hbox{and}\quad
   [X_\alpha,X_{\beta}]=N_{\alpha,\beta}X_{\alpha+\beta}  \quad\hbox{if $\alpha+\beta\in \Sigma$}
   \quad\hbox{ with }\quad 
 N_{\alpha,\beta}=-N_{-\alpha,-\beta}\in\ZM\ptf$$ 
 We assume the splitting compatible with this choice. Now let:
  $$Y_\alpha=X_\alpha-X_{-\alpha}\qquad Z_\alpha=i(X_\alpha+X_{-\alpha})
  \qquad W_\alpha=iH_\alpha\,\,.$$
The elements $Y_\alpha$ and $Z_\alpha$ for $\alpha\in\Sigma^+$ together with the
$W_\alpha$ for $\alpha\in\Delta$ build a basis for  a real Lie algebra $\mathfrak u$.
 As in the proof of  Theorem 6.3 in Chapter III of \cite{Hel}, we see that the Killing form 
 is negative definite on $\mathfrak u$ and hence the Lie subgroup $\mathbf U\subset\G$ 
 with Lie algebra $\mathfrak u$ is compact. 
  Since  $\psstar$  preserves the splitting 
 $\psstar(X_\alpha)=X_{\psstar(\alpha)}$  for $\alpha\in\Delta$.   
Let $w$ be the element of maximal length in the Weyl group for $T^*$.
There is an $n^*\in \G$, uniquely determined modulo the center, such that 
the inner auto\-morphism $\ws=\mathrm{Int}(n^*)$
acts as $w$ on $T^*$ and such that $\ws(X_\alpha)=-X_{w\alpha}$ for $\alpha\in\Delta$.    This 
auto\-morphism is of order 2 and commutes with $\psstar$.  
Now let $\phi=w^*\circ\psstar$. Since $w\circ \psstar$ acts by $-1$ on $\Sigma$
this implies $\phi(X_\alpha)=-X_{\phi(\alpha)}=-X_{-\alpha}$ for $\alpha\in\Delta\ptf$
It follows from the commutation relations and the relations
$N_{\alpha,\beta}=-N_{-\alpha,-\beta}$ that $\phi(X_\alpha)=-X_{-\alpha}$ and
$\phi(H_\alpha)=-H_{\alpha}$ for all $\alpha\in\Sigma\ptf$
Now $\phi$ which acts as an automorphism of the real Lie algebras $\mathfrak g^{**}$
generated by the $X_\alpha$ for $\alpha\in\Sigma$ can be extended to an antilinear involution of 
$\mathfrak g^{**}\otimes\CM=\mathfrak g=\mathfrak u+i\mathfrak u$.
This, in turn, induces a Cartan involution $\btheta$ on $\G$: 
its fixed point set is the compact group $\mathbf U=U(\RM)$ with Lie algebra $\mathfrak u$ and
 $U$ is the inner form of $G^*$ defined by the Galois cocycle $a_1=1$ and
$a_\sigma=\ws$.
\end{proof}
 
We observe that when, moreover,  $G^*$ is almost simple, which means that the root system of $G^*$ is irreducible,
the classification shows that  condition (v)  holds  except when  
$G^*$ is split of type $A_n$ with $n\ge2$, or $D_n$ with $n\ge3$ odd, or $E_6$ or when
$G^*$ is quasi-split but non split of type $D_n$ with $n\ge4$ even.

\bibliographystyle{amsalpha}

\end{document}